\documentclass[10pt]{article}

\usepackage{a4wide}

\usepackage{amsmath,amsbsy,amssymb,latexsym}
\usepackage{amsthm}
\usepackage{url}


\newtheorem{thm}{Theorem}

\newtheorem{lem}{Lemma}

\theoremstyle{remark}
\newtheorem{rem}{Remark}

\theoremstyle{definition}
\newtheorem{defn}{Definition}

\newtheorem{ex}{Example}

 \newcommand{\rl}{{_{a_{i}}D_{x_{i}}^{{\alpha}_i}}}

\begin{document}

\title{A formulation of the fractional Noether-type theorem \\
for multidimensional Lagrangians}

\author{Agnieszka B. Malinowska\\
\url{a.malinowska@pb.edu.pl}}

\date{Faculty of Computer Science,
Bia{\l}ystok University of Technology,\\
15-351 Bia\l ystok, Poland}

\maketitle\


\begin{abstract}
This paper presents the Euler--Lagrange equations for fractional variational problems with multiple integrals. The fractional Noether-type theorem for conservative and nonconservative generalized physical systems is proved. Our approach uses well-known notion of the Riemann-Liouville fractional derivative.

\bigskip

\noindent \textbf{Keywords}:
Calculus of variation, fractional calculus;
fractional Euler--Lagrange equation;
fractional Noether-type theorem, conservative and nonconservative systems.

\medskip

\noindent {\bf MSC}: 49K05, 49S05, 26A33

\medskip

\noindent {\bf PACS}: 04.20.Fy, 45.10.Db, 45.10.Hj
\end{abstract}


\section{Introduction}

Fractional Calculus is the branch of mathematical analysis that studies
derivatives of arbitrary (real or complex) order.
Fractional differentiation and fractional integration
are now recognized as vital mathematical tools to model the behavior
and to understand complex systems, classical or quantum, conservative or
nonconservative, with or without constraints. Several books on the
subject have been written, illustrating the usefulness of the theory in widespread fields of science and engineering (see \cite{TM:rec} for a review).
Some researchers refer to FC as the calculus of the XXI century.
The fractional operators are nonlocal, therefore they are suitable for
constructing models possessing memory effect.

The fractional calculus of variations unifies the calculus of variations and the fractional calculus,
by inserting fractional derivatives into the variational functionals.
This occurs naturally in many problems of physics, mechanics,
and engineering, in order to provide more accurate models
of physical phenomena. The physical reasons for the appearance
of fractional equations are, in general, long-range dissipation
and nonconservatism.
A theory of the fractional calculus of variations started in
1996 with works of Riewe \cite{rie}, in order to better describe nonconservative
systems in mechanics. He explained: ``For conservative systems, variational methods are equivalent to the original used by Newton.
However, while Newton's equations allow nonconservative forces, the later techniques of Lagrangian and Hamiltonian mechanics have no direct way to dealing with them.'' Riewe formulated a Lagrangian with fractional derivatives, and obtained
the respective Euler--Lagrange equation, combining both conservative and
nonconservative cases. Recently, several approaches have been developed to generalize the least action principle and the Euler--Lagrange equations to include fractional derivatives. Investigations cover problems depending
on the Riemann-Liouville fractional derivative (see, e.g., \cite{Baleanu:MUSLIH,Kobelev,Sha}), the Caputo fractional derivative (see, e.g.,
\cite{agrawalCap,MyID:169,withTatiana:Basia}), and others \cite{Almeida:AML,Klimek,MyID:200,Stanislavsky}. We cite here also \cite{El-Nabulsi:Torres07}, where an interesting approach known as the Fractional Action-Like
Variational Approach (FALVA) to model nonconservative dynamical systems is presented, and its extension counterparts \cite{Nabulsi1,El-Nabulsi,Nabulsi2,Nabulsi4}. In \cite{MyID:110,Nabulsi3} the authors study multidimensional FALVA problems, which are generalization of 1D-FALVA problems with fractional derivatives defined in the sense of Cresson. We also mention study of
multidimensional fractional variational problems in context of the coherence problem \cite{Cresson,Cresson2} or via the fractional derivative defined in the sense of Jumarie \cite{MyID:182}. In the present manuscript we rather generalize the Riewe approach to multidimensional fractional variational problems and derive corresponding Euler--Lagrange equations.

Conservative physical systems imply frictionless motion and are a
simplification of the real dynamical world. Almost all processes observed in the physical
world are nonconservative. For
nonconservative dynamical systems the
conservations law are broken so that the standard Lagrangian or
Hamiltonian formalism is no longer valid for describing the behavior
of the system. However, it is still possible to obtain the validity
of Noether's principle using the fractional
calculus of variations. Roughly speaking, one can prove that Noether's conservation laws are
still valid if a new term, involving the nonconservative forces, is
added to the standard constants of motion.
The seminal work \cite{MyID:068} makes use of the notion of Euler--Lagrange
fractional extremal introduced by Riewe,
to prove a Noether-type theorem that
combine conservative and nonconservative cases. After that, other results about the subject appeared \cite{Atanackovic2,Cresson,MyID:094,MyID:149}. Multidimensional version of Noether's theorem is of most relevance to modern physics, particularly to relativistic field theories and to theories of gravity. To the best of our knowledge, the fractional Noether-type theorem for multidimensional Lagrangians is not available. Such a generalization is the aim of this paper. We prove the fractional
Noether theorem for multiple dimensional Lagrangians, and we show how our
results can be use. We trust that this paper will open
several new directions of research and applications.

The paper is organized as follows. In Section~\ref{sec2} we review the necessary
notions of fractional calculus. Our results are given in next sections: in Section~\ref{main}
we derive the Euler--Lagrange equations for fractional variational problems with multiple integrals (Theorem~\ref{th:E-L}), these conditions are then applied to physical problems; in Section~\ref{noether} we prove the fractional Noether-type theorem (Theorem~\ref{theo:tn}).


\section{Fractional Calculus}
\label{sec2}

In this section we review the necessary definitions and facts from
the fractional calculus. For more on the subject we refer the reader to
\cite{kilbas,Podlubny}.

Let $[a,b]$ be a finite interval, $\alpha \in \mathbb{R}$ and $0<\alpha<1$. We begin with Riemann-Liouville Fractional Integrals (RLFI) of order
$\alpha$ of function $f$ which are defined by: the left RLFI
\begin{equation}\label{RLFI1}
{_aI_x^\alpha}f(x)=\frac{1}{\Gamma(\alpha)}\int_a^x
(x-t)^{\alpha-1}f(t)dt,\quad x>a,
\end{equation}
the right RLFI
\begin{equation}\label{RLFI2}
{_xI_b^\alpha}f(x)=\frac{1}{\Gamma(\alpha)}\int_x^b(t-x)^{\alpha-1}
f(t)dt,\quad x<b,
\end{equation}
where $\Gamma(\cdot)$ represents the Gamma function. For $\alpha=0$, we set
${_aI_x^0}f={_xI_b^0}f:=If$, the identity operator. Using the defined fractional integrals
\eqref{RLFI1} and \eqref{RLFI2},
Riemann-Liouville fractional derivatives are defined as: the left
Riemann-Liouville fractional derivative (RLFD)
\begin{equation}\label{RLFD1}
{_aD_x^\alpha}f(x)=\frac{1}{\Gamma(1-\alpha)}\frac{d}{dx}\int_a^x
(x-t)^{-\alpha}f(t)dt=\frac{d}{dx}{_aI_x^{1-\alpha}}f(x),
\end{equation}
the right RLFD
\begin{equation}\label{RLFD2}
{_xD_b^\alpha}f(x)=\frac{-1}{\Gamma(1-\alpha)}\frac{d}{dx}\int_x^b
(t-x)^{-\alpha}
f(t)dt=\left(-\frac{d}{dx}\right){_xI_b^{1-\alpha}}f(x),
\end{equation}
where $\alpha$ is the order of the derivative. The operators \eqref{RLFI1}--\eqref{RLFD2} are obviously linear. Below we present the rules of fractional integration by parts for RLFI and RLFD which are particularly useful for our purposes.
\begin{lem}(\cite{int:partsRef})
\label{ipi}
Let $0<\alpha<1$, $p\geq1$,
$q \geq 1$, and $1/p+1/q\leq1+\alpha$. If $g\in L_p([a,b])$ and
$f\in L_q([a,b])$, then
$\int_{a}^{b}  g(x){_aI_x^\alpha}f(x)dx =\int_a^b f(x){_x I_b^\alpha}
g(x)dx$.
\end{lem}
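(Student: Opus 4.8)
The plan is to reduce the claimed identity to a single application of Fubini's theorem after substituting the integral definitions \eqref{RLFI1} and \eqref{RLFI2}. First I would expand the left-hand side as the iterated integral
\[
\int_a^b g(x)\,{_aI_x^\alpha}f(x)\,dx=\frac{1}{\Gamma(\alpha)}\int_a^b\!\!\int_a^x (x-t)^{\alpha-1}g(x)f(t)\,dt\,dx,
\]
and observe that the domain of integration is the triangle $\{(x,t):a\le t\le x\le b\}$. Interchanging the order of integration over this triangle replaces $\int_a^b\!\int_a^x$ by $\int_a^b\!\int_t^b$, so that the inner integral becomes $\int_t^b (x-t)^{\alpha-1}g(x)\,dx=\Gamma(\alpha)\,{_tI_b^\alpha}g(t)$ by definition \eqref{RLFI2}. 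Cancelling $\Gamma(\alpha)$ and renaming the outer variable then yields exactly $\int_a^b f(x)\,{_xI_b^\alpha}g(x)\,dx$, which is the right-hand side.

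The only genuine content is therefore the justification that Fubini's theorem applies, i.e. that the double integral converges absolutely:
\[
\int_a^b\!\!\int_a^x (x-t)^{\alpha-1}|g(x)|\,|f(t)|\,dt\,dx<\infty .
\]
This is where the hypotheses on $p$, $q$, and $\alpha$ enter, and it is the main obstacle. The clean route is to invoke the mapping properties of the Riemann--Liouville operator: by the Hardy--Littlewood theorem on fractional integration, ${_aI_x^\alpha}$ carries $L_q([a,b])$ boundedly into $L_s([a,b])$ with $1/s=1/q-\alpha$. Pairing this with $g\in L_p([a,b])$ through H\"older's inequality shows that $g\cdot{_aI_x^\alpha}f\in L_1([a,b])$ as soon as the conjugate exponent $p'$ of $p$ satisfies $p'\le s$, and a short computation verifies that $p'\le s$ is equivalent to the stated condition $1/p+1/q\le 1+\alpha$. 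On the bounded interval $[a,b]$ the inclusion $L_s\subset L_{p'}$ valid for $s\ge p'$ makes the inequality, rather than equality, in the hypothesis harmless.

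The delicate points I would watch are the endpoint cases $p=1$ or $q=1$ and the borderline $1/p+1/q=1+\alpha$, where the Hardy--Littlewood estimate degenerates to a weak-type bound; on a finite interval these are absorbed by the inclusions between the $L_r$ spaces, so absolute convergence still holds and the interchange of integrals is legitimate. Since the identity is classical --- it is precisely the integration-by-parts formula recorded in \cite{int:partsRef} --- I would carry out the Fubini computation explicitly and simply cite the fractional-integration mapping theorem for the integrability step rather than reproving it.
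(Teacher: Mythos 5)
The paper does not prove Lemma~\ref{ipi} at all: it is quoted from Love and Young \cite{int:partsRef} and used as a black box, so there is no internal proof to compare against. Your argument is, in substance, the standard proof of that classical result. Writing out the left-hand side via \eqref{RLFI1}, swapping the order of integration over the triangle $\{a\le t\le x\le b\}$, and recognizing $\frac{1}{\Gamma(\alpha)}\int_t^b (x-t)^{\alpha-1}g(x)\,dx$ as ${_tI_b^\alpha}g(t)$ is exactly right, and you correctly isolate the real content as the absolute convergence needed for Fubini. Your reduction of that to the Hardy--Littlewood mapping theorem ${_aI_x^\alpha}\colon L_q\to L_s$ with $1/s=1/q-\alpha$, paired with $g\in L_p$ by H\"older, is sound, and the computation showing $p'\le s$ is equivalent to $1/p+1/q\le 1+\alpha$ is correct. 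One small point: Hardy--Littlewood in the form you quote requires $1<q<1/\alpha$; when $1/q\le\alpha$ the formula $1/s=1/q-\alpha$ is meaningless, but then ${_aI_x^\alpha}f$ lands in $L_r$ for every finite $r$ (and in a H\"older class when $\alpha q>1$), so on the bounded interval this case is harmless --- it just needs to be split off rather than subsumed under the same exponent formula.

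The genuine gap is in your final paragraph. When the inequality $1/p+1/q\le 1+\alpha$ is strict and $q=1$ (or $p=1$), the weak-type bound ${_aI_x^\alpha}\colon L_1\to L_{1/(1-\alpha),\infty}$ does combine with the finite measure of $[a,b]$ to give ${_aI_x^\alpha}f\in L_r$ for all $r<1/(1-\alpha)$, and since then $p'<1/(1-\alpha)$ the H\"older pairing goes through: those endpoint cases really are absorbed, as you say. But at the exact borderline $1/p+1/q=1+\alpha$ with $q=1$, one has $p'=1/(1-\alpha)$ precisely, and a weak-$L_{p'}$ function on a finite interval need not lie in $L_{p'}$, nor pair integrably with every $g\in L_p$ (if $\int|gh|<\infty$ held for all $g\in L_p$, converse H\"older would force $h\in L_{p'}$). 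So ``absorbed by the inclusions between the $L_r$ spaces'' fails exactly there, and your proof does not cover that case. This is not an artifact: the classical statement in Love--Young, and in Samko, Kilbas and Marichev, carries the proviso that $p\neq 1$ and $q\neq 1$ when $1/p+1/q=1+\alpha$ --- a proviso the paper's own transcription of the lemma silently drops. Your proof is therefore the right one on the full range where the lemma is actually valid, but the borderline sentence should either be replaced by the strict-inequality absorption argument above together with the excluded-case proviso, or the hypotheses should be stated with that exclusion as in \cite{int:partsRef}.
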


\begin{lem}\label{byparts}[\textrm{cf.} \cite{rie}]
Let $0<\alpha<1$ and functions $f,g$ obey the assumptions of Lemma~\ref{ipi}.
If $f(a)=f(b)=0$ (or the analogous condition for $g$ is fulfilled), then

\begin{equation}\label{ip}
\int_{a}^{b}  g(x) \, {_aD_x^\alpha}f(x)dx =\int_a^b f(x){_x D_b^\alpha}
g(x)dx.
\end{equation}
\end{lem}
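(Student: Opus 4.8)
The plan is to reduce \eqref{ip} to the already-established fractional integration by parts for \emph{integrals} (Lemma~\ref{ipi}), sandwiched between two ordinary integrations by parts, using the factorizations ${_aD_x^\alpha}f=\frac{d}{dx}{_aI_x^{1-\alpha}}f$ and ${_xD_b^\alpha}g=-\frac{d}{dx}{_xI_b^{1-\alpha}}g$ supplied by \eqref{RLFD1}--\eqref{RLFD2}.

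First I would rewrite the left derivative of $f$ as a fractional integral of $f'$. Integrating by parts inside the kernel integral ${_aI_x^{1-\alpha}}f(x)=\frac{1}{\Gamma(1-\alpha)}\int_a^x(x-t)^{-\alpha}f(t)\,dt$ and then differentiating in $x$ yields the identity ${_aD_x^\alpha}f(x)=\frac{(x-a)^{-\alpha}}{\Gamma(1-\alpha)}f(a)+{_aI_x^{1-\alpha}}f'(x)$; since $f(a)=0$, the singular boundary term drops out and ${_aD_x^\alpha}f={_aI_x^{1-\alpha}}f'$.

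Substituting this into the left-hand side of \eqref{ip} gives $\int_a^b g(x)\,{_aI_x^{1-\alpha}}f'(x)\,dx$, to which I would apply Lemma~\ref{ipi} with order $1-\alpha$ (for the pair $g,f'$), transferring the fractional integral onto $g$, so that the integral becomes $\int_a^b f'(x)\,{_xI_b^{1-\alpha}}g(x)\,dx$. A final classical integration by parts in $x$ produces the boundary term $\bigl[f(x)\,{_xI_b^{1-\alpha}}g(x)\bigr]_a^b$, which vanishes precisely because $f(a)=f(b)=0$, together with $-\int_a^b f(x)\,\frac{d}{dx}{_xI_b^{1-\alpha}}g(x)\,dx$. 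Recognizing $\frac{d}{dx}{_xI_b^{1-\alpha}}g=-{_xD_b^\alpha}g$ from \eqref{RLFD2} turns the two minus signs into a plus and delivers exactly $\int_a^b f(x)\,{_xD_b^\alpha}g(x)\,dx$, as claimed. If instead the analogous condition on $g$ is assumed, I would run the symmetric argument starting from the right derivative.

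The steps that need care are the vanishing of the boundary contributions and the admissibility of Lemma~\ref{ipi} once the order is shifted from $\alpha$ to $1-\alpha$. The singular factors $(x-a)^{-\alpha}$ and $(x-t)^{-\alpha}$ make the boundary evaluations delicate, so I would lean on $f(a)=f(b)=0$ to annihilate them, and I would check that the pair $(g,f')$ meets the integrability requirement $1/p+1/q\le 1+(1-\alpha)$ of Lemma~\ref{ipi}; this is where a mild regularity assumption on $f$ (absolute continuity, as is available for the admissible variations used later) is implicitly invoked. I expect this verification, rather than the algebra, to be the main obstacle.
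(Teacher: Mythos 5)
Your proof is correct, but be aware that the paper itself contains no proof of this lemma to compare against: it is stated with only the pointer ``cf.~\cite{rie}'' and used as a black box. Your reconstruction is the standard argument and fills the gap soundly: converting ${_aD_x^\alpha}f$ into the Caputo-type form ${_aI_x^{1-\alpha}}f'$ (legitimate once $f(a)=0$ and $f$ is absolutely continuous), transferring the fractional integral via Lemma~\ref{ipi} at order $1-\alpha$, and closing with one classical integration by parts whose boundary term $\bigl[f(x)\,{_xI_b^{1-\alpha}}g(x)\bigr]_a^b$ is killed by $f(a)=f(b)=0$, with the sign recovered from \eqref{RLFD2}. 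A common alternative in the literature strips the outer $d/dx$ off ${_aI_x^{1-\alpha}}f$ by classical integration by parts \emph{first} and only then invokes Lemma~\ref{ipi}; that variant demands differentiability of $g$ and produces a boundary term involving $g(b)$, so your choice to place the derivative on $f$ is better matched to the hypothesis actually stated. Two caveats you partly flag deserve emphasis: the lemma's literal hypotheses ($f\in L_q$, $g\in L_p$, $1/p+1/q\le 1+\alpha$) do not by themselves guarantee that $f'$, or even the pointwise values $f(a)$, $f(b)$, exist, so the absolute continuity you invoke is a genuine (if mild and contextually harmless) strengthening that any proof along these lines requires; and $f(b)=0$ annihilates the upper boundary term only if ${_xI_b^{1-\alpha}}g(x)$ remains bounded as $x\to b$, which holds by H\"older when $p>1/(1-\alpha)$ but not for an arbitrary $g\in L_p$ --- in the paper's application this is covered by assumption (iii) of Section~\ref{main}, which postulates continuity and integrability of the right-sided derivatives of $\partial\mathcal{L}/\partial\rl u_j$. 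With those provisos made explicit, your argument is complete and at least as rigorous as the source the paper defers to.
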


Partial fractional integrals and derivatives are natural generalization of the corresponding onedimensional fractional integrals and derivatives, being taken with respect to one or several variables. For $(x_1,\ldots,x_n)$, $(\alpha_1,\ldots,\alpha_n)$, where $0<\alpha_i<1$, $i=1,\ldots,n$ and $[a_1,b_1]\times \ldots \times [a_n,b_n]$, partial Riemann-Liouville fractional integrals of order $\alpha_k$ with respect to $x_k$ are defined by

\begin{equation*}
{_{a_{k}}I_{x_{k}}^{{\alpha}_k}}f(x_1,\ldots,x_n)
=\frac{1}{\Gamma(\alpha_k)}\int_{a_k}^{x_k}(x_k-t_k)^{\alpha_k-1}f(x_1,\ldots,x_{k-1},t_k,x_{k+1},\ldots,x_n)dt_k,\quad x_k>a_k,
\end{equation*}
\begin{equation*}
{_{x_{k}}I_{b_{k}}^{{\alpha}_k}}f(x_1,\ldots,x_n)
=\frac{1}{\Gamma(\alpha_k)}\int_{x_k}^{b_k}(t_k-x_k)^{\alpha_k-1}f(x_1,\ldots,x_{k-1},t_k,x_{k+1},\ldots,x_n)dt_k,\quad x_k<b_k.
\end{equation*}

Partial Riemann-Liouville derivatives are defined by:
\begin{equation*}
{_{a_{k}}D_{x_{k}}^{{\alpha}_k}}f(x_1,\ldots,x_n)
=\frac{1}{\Gamma(1-\alpha_k)}\frac{\partial}{\partial x_k}\int_{a_k}^{x_k}(x_k-t_k)^{-\alpha_k}f(x_1,\ldots,x_{k-1},t_k,x_{k+1},\ldots,x_n)dt_k,
\end{equation*}
\begin{equation*}
{_{x_{k}}D_{b_{k}}^{{\alpha}_k}}f(x_1,\ldots,x_n)
=-\frac{1}{\Gamma(1-\alpha_k)}\frac{\partial}{\partial x_k}\int_{x_k}^{b_k}(t_k-x_k)^{-\alpha_k}f(x_1,\ldots,x_{k-1},t_k,x_{k+1},\ldots,x_n)dt_k.
\end{equation*}

\section{Fractional Euler--Lagrange Equations}
\label{main}

Consider a physical system characterized by a set of functions
\begin{equation}\label{system}
u_j(t,x_1,\ldots,x_n), \quad j=1,\ldots,m,
\end{equation}
depending on time $t$ and the space coordinates $x_1,\ldots,x_n$. We can simplify the notation by interpreting \eqref{system} as a vector function $u=(u_1,\ldots,u_m)$ and writing $t=x_0$, $x=(x_0,x_1,\ldots,x_n)$, $dx=dx_0dx_1\cdots dx_n$. Then \eqref{system} becomes simply $u(x)$ and is called a vector field. Following \cite{Das,MyID:110}, we introduce the operator $\nabla^{\alpha}$ defined by
$\nabla^{\alpha}=({_{a_0}D_{x_{0}}^{{\alpha}_0}},{_{a_{1}}D_{x_{1}}^{{\alpha}_1}},\cdots, {_{a_{n}}D_{x_{n}}^{{\alpha}_n}})$,
where $\alpha=(\alpha_0,\alpha_1,\ldots,\alpha_n)$, $0<\alpha_i<1$, $i=0,\ldots,n$. Consider the action functional in the form
\begin{equation}\label{problem}
\mathcal{J}(u)=\int_{a_0}^{b_0} L(u,\nabla^{\alpha}u)dx_0
=\int_{\Omega}\mathcal{L}(u,\nabla^{\alpha}u)dx,
\end{equation}
where $R=[a_1,b_1]\times \ldots \times [a_n,b_n]$, $\Omega=R\times [a_0,b_0]$. The boundary conditions are specified by asking that $u(x)=\varphi(x)$ for $x\in \partial \Omega$, where $\varphi:\partial \Omega \rightarrow \mathbb{R}^{m}$ is a given function. The functions $L(u,\nabla^{\alpha}u)$ and $\mathcal{L}(u,\nabla^{\alpha}u)$ are called the Lagrangian and Lagrangian density of the field, respectively. The advantage of using $\nabla^{\alpha}$ in the formulation of problem \eqref{problem} lies in the fact that fractional derivatives take into account the space history and time history. Therefore, comparing with integer order derivatives, they are more appropriate for representation of natural phenomena. We assume that: (i) The set of admissible function consists of all function $u: \Omega \rightarrow \mathbb{R}^m$, such that $\rl u_j\in C(\Omega,\mathbb{R})$,  $i=0,\ldots,n$, $j=1,\ldots, m$, and $u|_{\partial \Omega}=\varphi$; (ii) $\mathcal{L}\in C^1(\mathbb{R}^m\times\mathbb{R}^{m(n+1)}; \mathbb{R})$; (iii) ${_{x_i}D_{b_{i}}^{{\alpha}_i}}\frac{\partial \mathcal{L}}{\partial\rl u_j}$,  $i=0,\ldots,n$, $j=1,\ldots,m$ are continuous on $(a_0,b_0)\times \ldots \times (a_n,b_n)$ and integrable on $\Omega$ for all admissible functions. Applying the principle of stationary action to \eqref{problem} we obtain the multidimensional fractional Euler--Lagrange equations for the field.

\begin{thm}\label{th:E-L}
If function $u$ minimizes the action functional \eqref{problem}, then $u$ satisfies the
multidimensional fractional Euler--Lagrange differential equations:
\begin{equation}\label{E-L}
\frac{\partial \mathcal{L}}{\partial u_j}+\sum_{i=0}^n {_{x_i}D_{b_{i}}^{{\alpha}_i}}\frac{\partial \mathcal{L}}{\partial\rl u_j}=0,\quad j=1,\ldots,m.
\end{equation}
\end{thm}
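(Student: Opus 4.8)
The plan is to derive the Euler--Lagrange equations by the standard variational argument adapted to the fractional setting. First I would take an admissible minimizer $u$ and embed it in a one-parameter family $u + \varepsilon \eta$, where $\eta = (\eta_1, \ldots, \eta_m)$ is an arbitrary variation satisfying $\eta|_{\partial \Omega} = 0$ so that the perturbed functions remain admissible and respect the boundary data. Since $\nabla^{\alpha}$ is linear (each partial Riemann--Liouville derivative is linear by the remark following \eqref{RLFD2}), we have $\nabla^{\alpha}(u + \varepsilon \eta) = \nabla^{\alpha} u + \varepsilon \nabla^{\alpha} \eta$. Then I would form $\phi(\varepsilon) = \mathcal{J}(u + \varepsilon \eta)$, and since $u$ is a minimizer, the first-order necessary condition $\phi'(0) = 0$ must hold.

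Next I would compute $\phi'(0)$ by differentiating under the integral sign, which is justified by the $C^1$ hypothesis (ii) on $\mathcal{L}$ together with the continuity assumptions. This yields
\begin{equation*}
\phi'(0) = \int_{\Omega} \left[ \sum_{j=1}^m \frac{\partial \mathcal{L}}{\partial u_j} \eta_j + \sum_{j=1}^m \sum_{i=0}^n \frac{\partial \mathcal{L}}{\partial \rl u_j} \, \rl \eta_j \right] dx = 0.
\end{equation*}
The key maneuver is to transfer the fractional derivative off each $\rl \eta_j$ and onto the coefficient $\frac{\partial \mathcal{L}}{\partial \rl u_j}$. For each coordinate direction $i$, I would apply the fractional integration-by-parts formula of Lemma~\ref{byparts} in the single variable $x_i$ (treating the remaining variables as parameters and using Fubini to organize the multiple integral). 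Because $\eta$ vanishes on $\partial \Omega$, the boundary term hypothesis of Lemma~\ref{byparts} is satisfied in each direction, so each term converts cleanly to $\int f \, {_{x_i}D_{b_i}^{\alpha_i}} g$, producing $\int_{\Omega} \eta_j \, {_{x_i}D_{b_i}^{\alpha_i}} \frac{\partial \mathcal{L}}{\partial \rl u_j} \, dx$. Assumption (iii) guarantees these right-sided derivatives exist and are integrable, so the manipulation is legitimate.

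After collecting terms, the condition reads $\int_{\Omega} \sum_{j=1}^m \eta_j \left[ \frac{\partial \mathcal{L}}{\partial u_j} + \sum_{i=0}^n {_{x_i}D_{b_i}^{\alpha_i}} \frac{\partial \mathcal{L}}{\partial \rl u_j} \right] dx = 0$ for every admissible variation $\eta$. Since the $\eta_j$ are arbitrary and independent, I would invoke the fundamental lemma of the calculus of variations (applied componentwise, choosing variations supported near an interior point) to conclude that each bracketed expression vanishes identically on $\Omega$, which is exactly \eqref{E-L}.

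The main obstacle I anticipate is the rigorous justification of the componentwise integration by parts in the multidimensional setting: Lemma~\ref{byparts} is stated for functions on a single interval $[a,b]$, so I must verify that the integrability and regularity conditions (the $L_p$/$L_q$ membership and the vanishing boundary values) hold for the relevant one-variable slices after applying Fubini, and that interchanging the order of integration is valid. The continuity and integrability hypotheses (i)--(iii) are designed precisely to underwrite these steps, but care is needed to ensure that $\frac{\partial \mathcal{L}}{\partial \rl u_j}$ and $\eta_j$ satisfy the summability requirements of Lemma~\ref{ipi} in each direction.
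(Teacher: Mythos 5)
Your proposal follows exactly the paper's argument: first variation of $\mathcal{J}(u+\varepsilon\eta)$ with variations vanishing on $\partial\Omega$, differentiation under the integral sign, Fubini plus the fractional integration-by-parts formula \eqref{ip} of Lemma~\ref{byparts} applied direction by direction, and the fundamental lemma of the calculus of variations to conclude \eqref{E-L}. Your closing caveat about verifying the $L_p$/$L_q$ slice hypotheses is, if anything, more careful than the paper's own treatment, which invokes Fubini and Lemma~\ref{byparts} without elaboration.
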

\begin{proof}
The variations for this problem are functions $h:\Omega\rightarrow \mathbb{R}^{m}$ such that $\rl h_j\in C(\Omega,\mathbb{R})$,  $i=0,\ldots,n$, $j=1,\ldots, m$, and $h$  vanishes on the boundary: $h(0)=0$, $x\in\partial\Omega$. The condition that a function $u:\Omega\rightarrow \mathbb{R}^{m}$ be a critical point of the action $\mathcal{J}(u)$ is $\left. \frac{d}{d \varepsilon}\mathcal{J}(u+\varepsilon h)\right|_{\varepsilon =0}=0$
for all variations $h$ with $h|_{\partial \Omega}=0$. Differentiating under the integral sign we obtain
\begin{equation*}
0=\sum_{j=1}^{m}\int_{\Omega}\left(\frac{\partial \mathcal{L}}{\partial u_j}h_j+\sum_{i=0}^n \frac{\partial \mathcal{L}}{\partial\rl u_j}\rl h_j \right ) dx
\end{equation*}
The Fubini theorem allows us to rewrite integrals as the iterated integrals so that we can use the integration by parts formula \eqref{ip}:
\begin{equation*}
\int_{\Omega}\left(\frac{\partial \mathcal{L}}{\partial u_j}+\sum_{i=0}^n {_{x_i}D_{b_{i}}^{{\alpha}_i}}\frac{\partial \mathcal{L}}{\partial\rl u_j}\right) h_j dx, \quad j=1,\ldots m
\end{equation*}
since $h|_{\partial \Omega}=0$. Therefore,
\begin{equation*}
0=\sum_{j=1}^{m}\int_{\Omega}\left(\frac{\partial \mathcal{L}}{\partial u_j}+\sum_{i=0}^n {_{x_i}D_{b_{i}}^{{\alpha}_i}}\frac{\partial \mathcal{L}}{\partial\rl u_j}\right) h_j dx
\end{equation*}
By fundamental lemma of the calculus of variations (see Section~2.2 in \cite{G:H}), $u$ satisfies \eqref{E-L}.
\end{proof}

In the following we present physical examples. The main aim is to illustrate what are the Euler--Lagrange equations when we have a Lagrangian density depending on fractional derivatives.

\begin{ex}
Consider a minimizer of
$\mathcal{J}(u)=\frac{1}{2}\int \int_{R}\sum _{i=1}^{2}\left(\left(\rl u\right)^2-fu\right)dx,$
where $f:R\rightarrow \mathbb{R}$ is a given function, in a set of functions that satisfy condition $u=\varphi$ on $\partial R$, where $\varphi$ is a given function defined on the boundary $\partial R$ of $R=[a_1,b_1]\times [a_2,b_2]$. By Theorem~\ref{th:E-L} a minimizer satisfies the following equation $\sum _{i=1}^{2} {_{x_i}D_{b_{i}}^{{\alpha}_i}} \rl u=f.$
Observe that if $\alpha_i$ goes to $1$, then the operator $\rl$, $i=1,2$, can be replaced with $\frac{\partial}{\partial x_i}$, and the operator ${_{x_i}D_{b_{i}}^{{\alpha}_i}} $, $i=1,2$, can be replaced with $-\frac{\partial}{\partial x_i}$ (see \cite{Podlubny}). Thus, for $\alpha\rightarrow 1$ we obtain the Poisson equation that arises in the potential theory in electrostatic fields.
\end{ex}

\begin{ex}\label{co:sys}
Consider a motion of medium whose displacement may be described by a scalar function $u(t,x)$, where $x=(x_1,x_2)$. For example, this function might represent the transverse displacement of the membrane. Suppose that the kinetic energy $T$ and potential energy $V$ of the medium are given by:
$T\left(\frac{\partial u}{\partial t}\right)=\frac{1}{2}\int \rho \left(\frac{\partial u}{\partial t}\right)^2 dx$, $V(u)=\frac{1}{2}\int k \|\nabla u \|^2dx$, where $\rho (x)$ is a mass density and $k(x)$ is a stiffness, both assume positive. Then, the classical action functional is
$ \mathcal{J}(u)=\frac{1}{2}\int \int_{\Omega}\left(\rho \left(\frac{\partial u}{\partial t}\right)^2-k \|\nabla u\|^2 \right)dx dt.$
When we have the Lagrangian with the kinetic term depending on a fractional derivative, then
the fractional action functional has the form

\begin{equation}\label{ex:3}
\mathcal{J}(u)=\frac{1}{2}\int \int_{\Omega}\left(\rho \left({_{0}D_{t}^{{\alpha_0}}}u\right)^2-k \|\nabla u\|^2 \right)dx dt.
\end{equation}
The fractional Euler--Lagrange equation satisfied by a stationary point of \eqref{ex:3} is
\begin{equation*}
\rho {_{t}D_{t_1}^{{\alpha_0}}} {_{0}D_{t}^{{\alpha_0}}}u-\nabla (k \nabla u)=0.
\end{equation*}
If $\rho$ and $k$ are constants, then equation ${_{t}D_{t_1}^{{\alpha}}} {_{0}D_{t}^{{\alpha}}}u-c^2\Delta u=0$ where $c^2=k/\rho$ can be called the time-fractional wave equation. In the case when the kinetic and potential energy depend on fractional derivatives, the action functional for the system has the form
\begin{equation}\label{ex:2}
\mathcal{J}(u)=\frac{1}{2}\int \int_{\Omega}\left[\rho \left({_{0}D_{t}^{{\alpha_0}}}u\right)^2-k(({_{a_1}D_{x_1}^{{\alpha_1}}}u)^2+({_{a_2}D_{x_2}^{{\alpha_2}}} u)^2) \right]dx dt.
\end{equation}
The fractional Euler--Lagrange equation satisfied by a stationary point of \eqref{ex:2} is
\begin{equation*}
\rho {_{t}D_{t_1}^{{\alpha_0}}} {_{0}D_{t}^{{\alpha_0}}}u-k({_{x_1}D_{b_1}^{{\alpha_1}}}{_{a_1}D_{x_1}^{{\alpha_1}}} u+{_{x_2}D_{b_2}^{{\alpha_2}}} {_{a_2}D_{x_2}^{{\alpha_2}}} u)=0.
\end{equation*}
If $\rho$ and $k$ are constants, then
${_{t}D_{t_1}^{{\alpha}}} {_{0}D_{t}^{{\alpha}}}u-c^2({_{x_1}D_{b_1}^{{\alpha_1}}}{_{a_1}D_{x_1}^{{\alpha_1}}} u+{_{x_2}D_{b_2}^{{\alpha_2}}} {_{a_2}D_{x_2}^{{\alpha_2}}} u)=0$
can be called the space-and time-fractional wave equation. Observe that in the limit, $\alpha,\alpha_1,\alpha_2\rightarrow 1$, we obtain
the classical wave equation
$\frac{\partial^2 u}{\partial t^2}-c^2\Delta u=0$ with wave-speed $c$.
\end{ex}

\begin{rem}
Recently, several authors have investigated the
fractional diffusion/wave equation and its special properties.
Mostly fractional wave equations are obtained by changing classical partial derivatives by fractional, it can be the Riemann-Liouville, Caputo or another one \cite{Blackledge,Modes,Momani,Yuste,Schneider}. Here, following \cite{Cresson,Cresson2,Nabulsi3,Parsian} we use the different approach. We start with a variational formulation of a physical process in which one modifies the Lagrangian density by replacing integer order derivatives with fractional ones. Then the action integral in the sense of Hamilton is minimized and the governing equation of a physical process is obtained. This approach can be explain via the notion of fractional embedding, we refer the reader to \cite{Cresson,Cresson2}. The main remark is that equations by themselves do not have a universal significance, their form depending mostly on the coordinates systems being used to derive them. On the contrary the underlying first principle like the least-action principle carry an information which is of physical interest and not related to the coordinates system which is used.
\end{rem}
\begin{rem}
It should be mentioned, that since fractional operators are nonlocal, it can be extremely challenging to find analytical
solutions to fractional problems of the calculus of variations and, in many cases, solutions may not exist.
The question of existence of solutions to fractional variational problems is a complete open area of research. This needs attention.
As mentioned by Young,
the calculus of variations has born from the study
of necessary optimality conditions, but any such theory is ``naive'' until
the existence of minimizers is verified. The process leading
to the existence theorems was introduced by
Leonida Tonelli in 1915 by the so-called direct method.
During two centuries, mathematicians
were developing ``the naive approach to the calculus of variations''.
Similar situation happens now with the fractional
calculus of variations: the subject is only fifteen years old,
and is still in the ``naive period''. We believe
time has come to address the existence question,
and this will be considered in a forthcoming paper.
\end{rem}

\section{Fractional Noether-type Theorem}
\label{noether}

In this section we prove the fractional Noether-type theorem. We start by
introducing the notion of variational invariance.

\begin{defn}
\label{def:inv1}
Functional \eqref{problem} is said to be invariant under an
$\varepsilon$-parameter group of infinitesimal transformations
$\bar{u}(x)= u + \varepsilon\xi(x,u) + o(\varepsilon)$ if
\begin{equation}\label{inv}
\int_{\Omega^*}\mathcal{L}(u,\nabla^{\alpha}u)dx
=\int_{\Omega^*}\mathcal{L}(\bar{u},\nabla^{\alpha}\bar{u})dx
\end{equation}
for any $\Omega^*\subseteq \Omega$.
\end{defn}

The next theorem establishes a necessary condition of invariance.

\begin{thm}
If functional \eqref{problem} is invariant in the sense of
Definition~\ref{def:inv1}, then
\begin{equation}
\label{nes:noe}
\sum_{j=1}^{m}\frac{\partial \mathcal{L}}{\partial u_j}\xi_j+\sum_{j=1}^{m}\sum_{i=0}^n \frac{\partial \mathcal{L}}{\partial\rl u_j}\rl \xi_j=0.
\end{equation}
\end{thm}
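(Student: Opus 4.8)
The plan is to exploit the fact that the invariance identity \eqref{inv} holds on \emph{every} subregion $\Omega^*\subseteq\Omega$, which is what will ultimately let me pass from an integral identity to a pointwise one. First I would observe that the left-hand side of \eqref{inv} does not depend on $\varepsilon$, whereas the right-hand side depends on $\varepsilon$ only through $\bar{u}=u+\varepsilon\xi+o(\varepsilon)$. Differentiating both sides of \eqref{inv} with respect to $\varepsilon$ and evaluating at $\varepsilon=0$ therefore gives
\[
0=\left.\frac{d}{d\varepsilon}\int_{\Omega^*}\mathcal{L}(\bar{u},\nabla^{\alpha}\bar{u})\,dx\right|_{\varepsilon=0}.
\]

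Next I would differentiate under the integral sign, which is justified by the hypothesis (ii) that $\mathcal{L}\in C^1$ together with the continuity assumptions (i) and (iii) on the admissible fields, and then apply the chain rule to the integrand. The key point is the behaviour of the fractional derivatives of $\bar{u}$: since each operator $\rl$ is linear (as noted immediately after \eqref{RLFD2}), one has $\rl\bar{u}_j=\rl u_j+\varepsilon\,\rl\xi_j+o(\varepsilon)$, so that $\left.\frac{d}{d\varepsilon}\rl\bar{u}_j\right|_{\varepsilon=0}=\rl\xi_j$ and likewise $\left.\frac{d}{d\varepsilon}\bar{u}_j\right|_{\varepsilon=0}=\xi_j$. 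Substituting these into the chain-rule expansion yields
\[
0=\int_{\Omega^*}\left(\sum_{j=1}^{m}\frac{\partial\mathcal{L}}{\partial u_j}\xi_j+\sum_{j=1}^{m}\sum_{i=0}^{n}\frac{\partial\mathcal{L}}{\partial\rl u_j}\,\rl\xi_j\right)dx.
\]

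Finally, because this identity holds for every $\Omega^*\subseteq\Omega$ and the integrand is continuous (again by (i)--(iii)), a standard localization argument forces the integrand to vanish identically on $\Omega$, which is exactly \eqref{nes:noe}: were the integrand nonzero, say positive, at some interior point, continuity would provide a small ball on which it stays positive, and taking that ball as $\Omega^*$ would contradict the vanishing of the integral.

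I expect the main obstacle to be the interchange of $\frac{d}{d\varepsilon}$ with the nonlocal operator $\rl$, since a Riemann--Liouville derivative is an integro-differential operator rather than a pointwise one. The cleanest route is to avoid differentiating $\rl\bar{u}_j$ directly and instead insert the linear expansion of $\bar{u}_j$ \emph{first}, reducing everything to the elementary fact that the $\varepsilon$-derivative at $0$ of $c_0+\varepsilon c_1+o(\varepsilon)$ equals $c_1$. One should also check that applying $\rl$ preserves the $o(\varepsilon)$ remainder uniformly enough to differentiate under the integral; the continuity and integrability assumptions (i)--(iii) are precisely what guarantee both this step and the differentiation under the integral sign.
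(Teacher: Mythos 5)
Your proof is correct and takes essentially the same route as the paper: both arguments hinge on the linearity of the Riemann--Liouville operators to get $\left.\frac{d}{d\varepsilon}\rl\bar{u}_j\right|_{\varepsilon=0}=\rl\xi_j$, and on the arbitrariness of $\Omega^*$ to localize. The only (immaterial) difference is the order of steps --- the paper first localizes \eqref{inv} to the pointwise identity $\mathcal{L}(u,\nabla^{\alpha}u)=\mathcal{L}(\bar{u},\nabla^{\alpha}\bar{u})$ and then differentiates at $\varepsilon=0$, whereas you differentiate under the integral sign first and localize at the end, making explicit the regularity justifications the paper leaves implicit.
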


\begin{proof}
Equation \eqref{inv} is equivalent to
$\mathcal{L}(u,\nabla^{\alpha}u)
=\mathcal{L}( u + \varepsilon\xi(x,u) + o(\varepsilon),\nabla^{\alpha}( u + \varepsilon\xi(x,u) + o(\varepsilon)))$.
Differentiating both sides of with respect to $\varepsilon$, then substituting
$\varepsilon=0$, and using the definitions and properties of the
fractional partial derivatives, we obtain equation \eqref{nes:noe}.
\end{proof}

The following definition is similar to \cite{MyID:068} and useful in order to formulate the fractional Noether-type theorem.

\begin{defn}
\label{def:oprl}
For $0<\gamma <1$ and a given ordered pair of functions $f$ and $g$ ($f,g:\mathbb{R}^{n+1}\rightarrow \mathbb{R}$) we introduce the following operator:
$\mathcal{D}^{\gamma}\left(f,g\right) = f \cdot {_{a_i}D_{x_i}^\gamma} g - g \cdot {_{x_i}D_{b_i}^\gamma}f$,
where $x_i \in [a_i,b_i]$, $i=0,\ldots ,n$.
\end{defn}

We now prove the fractional Noether-type theorem.

\begin{thm}
\label{theo:tn}
If functional \eqref{problem} is invariant in the sense of
Definition~\ref{def:inv1}, then
\begin{equation}\label{eq:LC}
\sum_{i=0}^n\sum_{j=1}^{m}\mathcal{D}^{\alpha_i}\left(\frac{\partial \mathcal{L}}{\partial\rl u_j},\xi_j\right)=0
\end{equation}
along any solution of \eqref{E-L}.
\end{thm}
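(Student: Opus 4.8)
The plan is to read \eqref{eq:LC} as a purely pointwise identity and to derive it by playing the necessary invariance condition \eqref{nes:noe} against the Euler--Lagrange equations \eqref{E-L}. The starting observation is that along a solution of \eqref{E-L} both ingredients are available simultaneously, and the operator $\mathcal{D}^{\alpha_i}$ of Definition~\ref{def:oprl} is built precisely so that its two terms absorb the two sides of these relations.

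First I would unfold Definition~\ref{def:oprl}. Recalling that ${_{a_i}D_{x_i}^{\alpha_i}}$ is the same operator as $\rl$, the left-hand side of \eqref{eq:LC} splits as
\[
\sum_{i=0}^n\sum_{j=1}^{m}\frac{\partial \mathcal{L}}{\partial\rl u_j}\,\rl \xi_j
-\sum_{i=0}^n\sum_{j=1}^{m}\xi_j\,{_{x_i}D_{b_i}^{\alpha_i}}\frac{\partial \mathcal{L}}{\partial\rl u_j}.
\]
It therefore suffices to show that these two double sums are equal. Next I would rewrite the first of them using the necessary condition of invariance \eqref{nes:noe}, which rearranges to
\[
\sum_{i=0}^n\sum_{j=1}^{m}\frac{\partial \mathcal{L}}{\partial\rl u_j}\,\rl \xi_j
=-\sum_{j=1}^{m}\frac{\partial \mathcal{L}}{\partial u_j}\,\xi_j.
\]
Finally, since $u$ is a solution of the Euler--Lagrange equations, I would substitute $\partial \mathcal{L}/\partial u_j=-\sum_{i=0}^n {_{x_i}D_{b_i}^{\alpha_i}}(\partial \mathcal{L}/\partial\rl u_j)$ from \eqref{E-L} into the right-hand side above; this produces exactly the second double sum, so the difference collapses to zero and \eqref{eq:LC} follows.

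I do not expect a genuine analytic obstacle here: the argument is a short algebraic cancellation once \eqref{nes:noe} and \eqref{E-L} are both in hand, and no further fractional integration by parts is needed beyond what was already used to prove them. The single point that deserves care is the justification that \eqref{nes:noe} holds as a pointwise identity in $x$ rather than merely in integrated form; this is exactly what the arbitrariness of the subregion $\Omega^*\subseteq\Omega$ in Definition~\ref{def:inv1} secures, and it is what legitimizes performing the substitutions above pointwise along a fixed solution.
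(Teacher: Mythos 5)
Your proposal is correct and takes essentially the same route as the paper's own proof: both arguments substitute the Euler--Lagrange equations \eqref{E-L} into the necessary condition of invariance \eqref{nes:noe} and regroup the resulting two double sums into the operator $\mathcal{D}^{\alpha_i}$ of Definition~\ref{def:oprl}, the only difference being the cosmetic one that you unfold $\mathcal{D}^{\alpha_i}$ first and cancel afterwards, while the paper substitutes first and recognizes the operator at the end. Your closing remark on the pointwise validity of \eqref{nes:noe}, secured by the arbitrariness of $\Omega^*$ in Definition~\ref{def:inv1}, is consistent with how the paper derives that condition.
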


\begin{proof}
Using the fractional Euler--Lagrange equations \eqref{E-L},
we have:
\begin{equation}
\label{eq:eldf3}
\sum_{j=1}^{m}\frac{\partial \mathcal{L}}{\partial u_j}=-\sum_{j=1}^{m}\sum_{i=0}^n {_{x_i}D_{b_{i}}^{{\alpha}_i}}\frac{\partial \mathcal{L}}{\partial\rl u_j}.
\end{equation}
Substituting \eqref{eq:eldf3} in the necessary condition of
invariance \eqref{nes:noe} we get
\begin{multline*}
0=-\sum_{j=1}^{m}\sum_{i=0}^n {_{x_i}D_{b_{i}}^{{\alpha}_i}}\frac{\partial \mathcal{L}}{\partial\rl u_j}\xi_j+\sum_{j=1}^{m}\sum_{i=0}^n \frac{\partial \mathcal{L}}{\partial\rl u_j}\rl \xi_j\\
=\sum_{i=0}^n \left(\sum_{j=1}^{m} \frac{\partial \mathcal{L}}{\partial\rl u_j}\rl \xi_j-\sum_{j=1}^{m} {_{x_i}D_{b_{i}}^{{\alpha}_i}}\frac{\partial \mathcal{L}}{\partial\rl u_j}\xi_j\right).
\end{multline*}
By definition of operator $\mathcal{D}^{\gamma}$ we obtain equation \eqref{eq:LC}.
\end{proof}

\begin{rem}
In the particular case when $\alpha_i$ goes to $1$,  $i=0,\ldots,n$, we get
from the fractional conservation law \eqref{eq:LC} the classical Noether conservation law
without transformation of the independent variables (that is, conservation of current):
\begin{equation}\label{cl}
\sum_{i=0}^n \frac{\partial}{\partial x_i}\sum_{j=1}^{m}\frac{\partial \mathcal{L}}{\partial \left(\frac{\partial u_j}{\partial x_i}\right)}\xi_j=0
\end{equation}
along any extremal surface of
$\mathcal{J}(u)=\int_{\Omega}\mathcal{L}(u,\nabla u)dx.$
\end{rem}

\begin{ex}
Let us consider again the Lagrangian given in Example~\ref{co:sys}: $L=T-V$. Then the Hamiltonian, $H=T+V$, is conserved (see \cite{gold}). However, if the kinetic and potential energy depend on fractional derivatives, that is the action functional has the form \eqref{ex:2}, then it can be showed that the Hamiltonian is not conserved (\textrm{cf.} \cite{Klimek,rie}). Observe that functional \eqref{ex:2} is invariant (in the sense of Definition~\ref{def:inv1}) under transformations $\bar{u}(t,x)=u(t,x)+\varepsilon t^{\alpha_0-1}(x_1-a_1)^{\alpha_1-1}(x_2-a_2)^{\alpha_2-1}$. Therefore, by Theorem~\ref{theo:tn}, we obtain:
\begin{equation}\label{ap}
\sum_{i=0}^2\mathcal{D}^{\alpha_i}\left(\frac{\partial \mathcal{L}}{\partial\rl u},x_0^{\alpha_0-1}(x_1-a_1)^{\alpha_1-1}(x_2-a_2)^{\alpha_2-1}\right)=0,
\end{equation}
 with $x_0=t$ and $a_0=0$, along any extremal surface of \eqref{ex:2}. Note that when $\alpha_i$ goes to $1$,  $i=0,1,2$, expression \eqref{ap} has the form \eqref{cl}.
This shows that the proved fractional Noether-type theorem can be applied for conservative and nonconservative physical systems.
\end{ex}


\section*{Acknowledgments}
The author is supported
by Bia{\l}ystok University of Technology Grant S/WI/2/2011. I would like to thank the anonymous referees for their useful comments and valuable suggestions.



\end{document}